\newcommand{\lastcfrac}[2]{%
  \vphantom{\cfrac{#1}{#2}}%
  \raisebox{\dimexpr1ex-\height}{%
    $\displaystyle
      \raisebox{.5\height}{$\ddots$}+\cfrac{#1}{#2}
    $%
  }%
}
\definecolor{ashgrey}{rgb}{0.7, 0.75, 0.71}
\tikzstyle directed=[postaction={decorate,decoration={markings,
    mark=at position .5 with {\arrow[]{stealth}}}}]
\definecolor{gris-clair}{rgb}{0.8,0.8,0.8}
\tikzstyle directed=[postaction={decorate,decoration={markings, 
mark=at position .65 with {\arrow{latex}}}}]
\tikzset{
  symbol/.style={
    draw=none,
    every to/.append style={
      edge node={node [sloped, allow upside down, auto=false]{$#1$}}}
  }
}
\def\dar[#1]{\ar@<1pt>[#1]\ar@<-1pt>[#1]}
\newtheorem{theorem}{Theorem}[section]
\newtheorem{lemma}[theorem]{Lemma}
\theoremstyle{definition}
\theoremstyle{definition}
\title{An explicit decomposition formula of a matrix in $GL_{2}\left(\mathbb{Z}\right)$}
\author{D. FOSSE, MSc. Physics\\
dominique.fosse@a3.epfl.ch}\date{\vspace{-5ex}}
\begin{document}
\maketitle
\paragraph{Introduction}
Let $\mathcal{M}_{2}\left(\mathbb{Z}\right)$ the ring of all square matrices of order $2$ with coefficients in the ring $\mathbb{Z}$. Recall that $GL_{2}\left(\mathbb{Z}\right)$ denotes the unit group of $\mathcal{M}_{2}\left(\mathbb{Z}\right)$ and has the following caracterization:
\begin{equation*}
GL_{2}\left(\mathbb{Z}\right)=\left\{M\in\mathcal{M}_{2}\left(\mathbb{Z}\right)\big|\,\mathrm{det}(M)=\pm 1\right\}
\end{equation*}
We will make use of $C:=\left(\begin{smallmatrix}1&0\\0&-1\end{smallmatrix}\right)\in GL_{2}\left(\mathbb{Z}\right)$. Let's consider now
\begin{equation*}
SL_{2}\left(\mathbb{Z}\right)=\left\{M\in\mathcal{M}_{2}\left(\mathbb{Z}\right)\big|\,\mathrm{det}(M)=+ 1\right\}
\end{equation*}
which is a subgroup of $GL_{2}\left(\mathbb{Z}\right)$; we define $A:=\left(\begin{smallmatrix}1&1\\0&1\end{smallmatrix}\right)$ and $B:=\left(\begin{smallmatrix}1&0\\1&1\end{smallmatrix}\right)$ two elements of $SL_{2}\left(\mathbb{Z}\right)$. It is well known (for instance, see \cite{Kassel}) that $A$ and $B$ generates $SL_{2}\left(\mathbb{Z}\right)$; and from now on, we will use the following notation: 
\begin{equation*}
\langle A,B\rangle=SL_{2}\left(\mathbb{Z}\right)
\end{equation*}
Other pairs of generators can be considered; one can often find in the literature:
\begin{equation*}
S:=B^{-1}A B^{-1}=\left(\begin{smallmatrix}0&1\\-1&0\end{smallmatrix}\right)\qquad\textnormal{and}\qquad T:=B
\end{equation*}
Let $M:=\left(\begin{smallmatrix}a&b\\c&d\end{smallmatrix}\right)\in GL_{2}\left(\mathbb{Z}\right)$ and suppose $d\neq 0$ (the case $d=0$ is elementary and will be treated separately). The aim of this article is to demonstrate, using a funny induction, the following formula:
\begin{equation}\label{eq-1}
\boxed{
M=\left(A B^{-1}A\right)^{1-(-1)^{\left\lfloor\frac{j}{2}\right\rfloor}\mathrm{sgn}(d)}A\Big(\prod_{k=1}^{j}A^{-\left(2+(-1)^{k}n_{k}\right)}B\Big)\left(C A^{2}\right)^{\frac{1-\mathrm{det}(M)}{2}}A^{(-1)^{j}\mathrm{sgn}(d)\left(p_{j-1}c-q_{j-1}a\right)}B^{-1}A}
\end{equation}
Here, $\left[n_{1};n_{2},\ldots ,n_{j}\right]$ represents the simple finite continued fraction associated to the rational $\frac{b}{d}$; where $n_{1}\in\mathbb{Z}$ and $n_{i}\in\mathbb{N}^{\ast}$, $\forall i\in\llbracket 2,\, j\rrbracket$. Since $\left[n_{1};1\right]=\left[n_{1}+1\right]$ and $\left[n_{1};n_{2},\ldots ,n_{j},1\right]=\left[n_{1};n_{2},\ldots ,n_{j}+1\right]$, every rational number can be represented in two different ways and we will show that formula \eqref{eq-1} is independant of this choice of representation. The terms $p_{j-1}$ and $q_{j-1}$ come from the reduced fraction $\frac{p_{j-1}}{q_{j-1}}:=\left[n_{1};n_{2},\ldots ,n_{j-1}\right]$ with the initial condition $\left(p_{0},q_{0}\right):=(1,0)$. By definition of $\left[n_{1};n_{2},\ldots ,n_{j}\right]$, one has:
\begin{equation}\label{eq-1-0}
\frac{p_{j}}{q_{j}}=\frac{b}{d}\iff p_{j}d-q_{j}b=0
\end{equation}
Also, $\left\lfloor\frac{j}{2}\right\rfloor$ denotes the integer part of $\frac{j}{2}$ so that $(-1)^{\left\lfloor\frac{j}{2}\right\rfloor}=\pm 1$, depending on the residue of $j$ modulo $4$. If we note $I:=\left(\begin{smallmatrix}1&0\\0&1\end{smallmatrix}\right)$, then we verify by direct calculation that $\left(A B^{-1}A\right)^{2}=-I$; therefore:
\begin{equation}\label{factor}
\left(A B^{-1}A\right)^{1-(-1)^{\left\lfloor\frac{j}{2}\right\rfloor}\mathrm{sgn}(d)}=\left\{
\begin{array}{cr}
\left(A B^{-1}A\right)^{0}=I&\textnormal{if }(-1)^{\left\lfloor\frac{j}{2}\right\rfloor}\mathrm{sgn}(d)=+1\\
\left(A B^{-1}A\right)^{2}=-I&\textnormal{if }(-1)^{\left\lfloor\frac{j}{2}\right\rfloor}\mathrm{sgn}(d)=-1
\end{array}
\right.
\end{equation}
As $\left(A B^{-1}A\right)^{1-(-1)^{\left\lfloor\frac{j}{2}\right\rfloor}\mathrm{sgn}(d)}=\pm I$, this matrix commutes with any element of $GL_{2}\left(\mathbb{Z}\right)$ and we chose to write it as a factor of the right member of formula \eqref{eq-1}. The basic theory of continued fractions also ensures that $q_{k}>0$, $\forall k\in\llbracket 1,\,j\rrbracket$ and so there is no ambiguity regarding the sign of $p_{j-1}$ in case the ratio $\frac{p_{j-1}}{q_{j-1}}$ is negative. Note that $\mathrm{det}\left(M\right)=+1\iff M\in SL_{2}\left(\mathbb{Z}\right)$, then $\left(C A^{2}\right)^{\frac{1-\mathrm{det}(M)}{2}}=\left(C A^{2}\right)^{0}=I$ which means, as expected, that $C$ (which doesn't belong to $SL_{2}\left(\mathbb{Z}\right)$) vanishes from formula \eqref{eq-1} and we retrieve an expression of $M$ as a word in $\langle A,B\rangle$.
\paragraph{An explicit example}
\begin{enumerate}[$1)$]
\item Let $M:=\left(\begin{smallmatrix}-65&17\\42&-11\end{smallmatrix}\right)$; we verify that $\mathrm{det}\left(M\right)=1$ so that $M\in SL_{2}\left(\mathbb{Z}\right)$. We develop here what we call the \emph{first represention} of $\frac{b}{d}=-\frac{17}{11}$ which is $\left[-2;2,5\right]$. Explicitely, 
\begin{equation*}
-\frac{17}{11}= -2+\cfrac{1}{2+\cfrac{1}{5}}\implies j:=3\textnormal{ and }\left(n_{1},n_{2},n_{3}\right)=\left(-2,2,5\right)
\end{equation*}
Then, $(-1)^{\left\lfloor\frac{j}{2}\right\rfloor}\mathrm{sgn}(d)=(-1)^{\left\lfloor\frac{3}{2}\right\rfloor}\mathrm{sgn}(-11)=(-1)^{1}(-1)=+1$. The reduced fraction $\frac{p_{j-1}}{q_{j-1}}=\frac{p_{2}}{q_{2}}$ is then $\left[-2;2\right]=-2+\frac{1}{2}=-\frac{3}{2}$. As stated in the introduction, $q_{2}$ is necessarily a positive integer; thus $\left(p_{2},q_{2}\right)=\left(-3,2\right)$. Then $b_{j}=b_{3}=(-1)^{3}\mathrm{sgn}(-11)\left(-3\cdot 42-2\cdot (-65)\right)=4$. Also, $\mathrm{det}(M)=1\implies\frac{1-\mathrm{det}(M)}{2}=0\implies\left(C A^{2}\right)^{\frac{1-\mathrm{det}(M)}{2}}=\left(C A^{2}\right)^{0}=I$. That's it; we have everything to apply formula \eqref{eq-1}:
\begin{align}\label{eq-2}
M&=I\cdot A\big(A^{-\left(2-n_{1}\right)}B\big)\big(A^{-\left(2+n_{2}\right)}B\big)\big(A^{-\left(2-n_{3}\right)}B\big)I\cdot A^{b_{3}}B^{-1}A\nonumber\\
&=A\big(A^{-\left(2-(-2)\right)}B\big)\big(A^{-\left(2+2\right)}B\big)\big(A^{-\left(2-5\right)}B\big)A^{4}B^{-1}A\nonumber\\
&=A^{-3}B A^{-4}B A^{3}B A^{4}B^{-1}A
\end{align}
\item Let's consider the same matrix $M:=\left(\begin{smallmatrix}-65&17\\42&-11\end{smallmatrix}\right)$ but this time, let's use the \emph{second representation} of $\frac{b}{d}=-\frac{17}{11}$ which is $\left[-2;2,4,1\right]\implies\left(n_{1},n_{2},n_{3},n_{4}\right)=(-2,2,4,1)$. This time, $j:=4$ and thus $(-1)^{\left\lfloor\frac{j}{2}\right\rfloor}\mathrm{sgn}(d)=(-1)^{\left\lfloor\frac{4}{2}\right\rfloor}\mathrm{sgn}(-11)=(-1)^{2}(-1)=-1$. The reduced fraction $\frac{p_{j-1}}{q_{j-1}}=\frac{p_{3}}{q_{3}}$ is then $\left[-2;2,4\right]=-2+\frac{1}{2+\frac{1}{4}}=-\frac{14}{9}\implies\left(p_{j-1},q_{j-1}\right)=\left(p_{3},q_{3}\right)=(-14,9)$. Then, $b_{j}=b_{4}=(-1)^{4}\mathrm{sgn}\left(-11\right)\left((-14)42-9(-65)\right)=3$. Then, 
\begin{align}\label{eq-3}
M&=\left(A B^{-1}A\right)^{2}A\big(A^{-\left(2-n_{1}\right)}B\big)\big(A^{-\left(2+n_{2}\right)}B\big)\big(A^{-\left(2-n_{3}\right)}B\big)\big(A^{-\left(2+n_{4}\right)}B\big)A^{b_{4}}B^{-1}A\nonumber\\
&=\left(A B^{-1}A\right)^{2}A\big(A^{-\left(2-(-2)\right)}B\big)\big(A^{-\left(2+2\right)}B\big)\big(A^{-\left(2-4\right)}B\big)\big(A^{-\left(2+1\right)}B\big)A^{3}B^{-1}A\\
&=A B^{-1}A^{2}B^{-1}A^{2}A^{-4}B A^{-4}B A^{2}B A^{-3}B A^{3}B^{-1}A\nonumber\\
&=A B^{-1}A^{2}B^{-1}A^{-2}B A^{-4}B A^{2}B A^{-3}B A^{3}B^{-1}A
\end{align}
\end{enumerate}
Comparing \eqref{eq-2} and \eqref{eq-3}, we get two different expressions of $M$ in $\langle A,B\rangle$ and formula \eqref{eq-1} works well in both representations.
\paragraph{Some basic lemmas}
We list here all the requiered results used in the demonstration of formula \eqref{eq-1}.
\begin{lemma}[Powers of $A$ and $B$]\label{lemma-1}
For all $n\in\mathbb{Z}$, 
\begin{equation}\label{lemma-1-1-eq}
A^{n}=\begin{pmatrix}
1&n\\
0&1
\end{pmatrix}\qquad\qquad
B^{n}=\begin{pmatrix}
1&0\\
n&1
\end{pmatrix}
\end{equation}
\end{lemma}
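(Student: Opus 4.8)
The plan is to verify both identities by a straightforward induction on $n$, treating the non-negative exponents first and then deducing the negative ones from the uniqueness of inverses in $GL_{2}\left(\mathbb{Z}\right)$.

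For $n\in\mathbb{N}$ I would induct on $n$. The base case $n=0$ is immediate, since $A^{0}=I=\left(\begin{smallmatrix}1&0\\0&1\end{smallmatrix}\right)$ agrees with $\left(\begin{smallmatrix}1&n\\0&1\end{smallmatrix}\right)$ at $n=0$, and likewise $B^{0}=I$. For the inductive step, supposing $A^{n}=\left(\begin{smallmatrix}1&n\\0&1\end{smallmatrix}\right)$, a one-line matrix multiplication gives
\begin{equation*}
A^{n+1}=A^{n}A=\begin{pmatrix}1&n\\0&1\end{pmatrix}\begin{pmatrix}1&1\\0&1\end{pmatrix}=\begin{pmatrix}1&n+1\\0&1\end{pmatrix},
\end{equation*}
and the same computation with $B$ in place of $A$ (now the off-diagonal entry sits in the lower-left corner) yields $B^{n+1}=\left(\begin{smallmatrix}1&0\\n+1&1\end{smallmatrix}\right)$, closing the induction.

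For $n<0$, write $n=-m$ with $m\in\mathbb{N}^{\ast}$. Multiplying out shows that $\left(\begin{smallmatrix}1&-m\\0&1\end{smallmatrix}\right)\left(\begin{smallmatrix}1&m\\0&1\end{smallmatrix}\right)=\left(\begin{smallmatrix}1&m\\0&1\end{smallmatrix}\right)\left(\begin{smallmatrix}1&-m\\0&1\end{smallmatrix}\right)=I$, so $\left(\begin{smallmatrix}1&-m\\0&1\end{smallmatrix}\right)$ is the two-sided inverse of $A^{m}$, whence $A^{-m}=\left(\begin{smallmatrix}1&-m\\0&1\end{smallmatrix}\right)$; the argument for $B^{-m}$ is identical. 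Alternatively, one may note that $B$ is the transpose of $A$, so that $B^{n}$ is the transpose of $A^{n}$ for every $n\in\mathbb{Z}$, and the formula for $B^{n}$ follows at once from the one for $A^{n}$. There is no genuine obstacle here — the statement is a routine verification — and the only place calling for a moment's attention is the passage to negative exponents, which the inverse computation above settles.
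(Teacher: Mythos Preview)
Your proof is correct and follows essentially the same approach as the paper: induction on $n\geq 0$ via a one-line matrix product, then the negative case by checking that $\left(\begin{smallmatrix}1&-m\\0&1\end{smallmatrix}\right)$ is the inverse of $A^{m}$ (and similarly for $B$). The transpose remark is a nice extra but not needed.
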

\begin{proof}
Suppose $n\geq 0$. For $n=0$ or $n=1$, \eqref{lemma-1-1-eq} are both verified. Suppose \eqref{lemma-1-1-eq} true for $n>1$; one gets $\left(\begin{smallmatrix}1&1\\0&1\end{smallmatrix}\right)A^{n}=\left(\begin{smallmatrix}1&1\\0&1\end{smallmatrix}\right)\left(\begin{smallmatrix}1&n\\0&1\end{smallmatrix}\right)=\left(\begin{smallmatrix}1&n+1\\0&1\end{smallmatrix}\right)=A^{n}\left(\begin{smallmatrix}1&1\\0&1\end{smallmatrix}\right)\implies A\cdot A^{n}=A^{n}\cdot A=A^{n+1}$. Regarding $B$, we have $B\cdot B^{n}=\left(\begin{smallmatrix}1&0\\1&1\end{smallmatrix}\right)\left(\begin{smallmatrix}1&0\\n&1\end{smallmatrix}\right)=\left(\begin{smallmatrix}1&0\\n+1&1\end{smallmatrix}\right)=B^{n+1}=B^{n}\cdot B$. Now let's compute the inverse of $A^{n}$: $\left(A^{n}\right)^{-1}=\left(\begin{smallmatrix}1&n\\0&1\end{smallmatrix}\right)^{-1}=\left(\begin{smallmatrix}1&-n\\0&1\end{smallmatrix}\right)=A^{-n}$ and we get something similar for $B$: $B^{-n}=\left(\begin{smallmatrix}1&0\\-n&1\end{smallmatrix}\right)$ which proves \eqref{lemma-1-1-eq}, $\forall n\in\mathbb{Z}$.
\end{proof}
Let's now treat the case $d:=0$ separately.
\begin{lemma}[The case $d:=0$]\label{lemma-2}
Let $M_{0}:=\left(\begin{smallmatrix}a&b\\c&0\end{smallmatrix}\right)\in GL_{2}\left(\mathbb{Z}\right)$, then $M_{0}\in\langle A,B,C\rangle$
\end{lemma}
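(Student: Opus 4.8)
The plan is to exploit the determinant condition to force $b$ and $c$ into $\{\pm 1\}$, then to clear the top-left entry of $M_{0}$ by a single left multiplication by a power of $A$, leaving an anti-diagonal matrix that one recognizes by hand as a short word in $A$, $B$, $C$.

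First I would observe that $\mathrm{det}(M_{0})=a\cdot 0-bc=-bc=\pm 1$, so $b,c\in\{+1,-1\}$; in particular $c^{2}=1$. By Lemma~\ref{lemma-1} this gives
\[
A^{-ac}M_{0}=\begin{pmatrix}1&-ac\\0&1\end{pmatrix}\begin{pmatrix}a&b\\c&0\end{pmatrix}=\begin{pmatrix}a-ac^{2}&b\\c&0\end{pmatrix}=\begin{pmatrix}0&b\\c&0\end{pmatrix}=:N .
\]
Since $M_{0}=A^{ac}N$ and $A^{ac}\in\langle A,B,C\rangle$, it remains to check that each of the four anti-diagonal matrices $N=\left(\begin{smallmatrix}0&b\\c&0\end{smallmatrix}\right)$ with $b,c\in\{\pm 1\}$ lies in $\langle A,B,C\rangle$.

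For this I would use two facts already recorded above: $S=B^{-1}AB^{-1}=\left(\begin{smallmatrix}0&1\\-1&0\end{smallmatrix}\right)\in\langle A,B\rangle$ and $\left(AB^{-1}A\right)^{2}=-I\in\langle A,B\rangle$, together with the one-line computation $CS=\left(\begin{smallmatrix}1&0\\0&-1\end{smallmatrix}\right)\left(\begin{smallmatrix}0&1\\-1&0\end{smallmatrix}\right)=\left(\begin{smallmatrix}0&1\\1&0\end{smallmatrix}\right)$. A case check then yields $N=S$ when $(b,c)=(1,-1)$, $N=\left(AB^{-1}A\right)^{2}S$ when $(b,c)=(-1,1)$, $N=CS$ when $(b,c)=(1,1)$, and $N=\left(AB^{-1}A\right)^{2}CS$ when $(b,c)=(-1,-1)$. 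In every case $N$, hence $M_{0}=A^{ac}N$, is an explicit word in $A$, $B$, $C$, which proves the lemma.

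I do not expect a genuine obstacle here, since everything reduces to $2\times 2$ arithmetic; the only point deserving attention is that $C\notin SL_{2}(\mathbb{Z})$, so the two cases with $\mathrm{det}(M_{0})=-1$ naturally carry a factor $-I$ which must be re-expressed inside $\langle A,B\rangle$ via $\left(AB^{-1}A\right)^{2}=-I$. Indeed the four cases split cleanly according to the sign of the determinant: $\mathrm{det}(M_{0})=+1$ gives $N\in\langle A,B\rangle=SL_{2}(\mathbb{Z})$, while $\mathrm{det}(M_{0})=-1$ inserts a single copy of $C$, matching the role played by $C$ in formula~\eqref{eq-1}.
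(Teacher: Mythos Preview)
Your proof is correct and follows essentially the same four-case analysis on $(b,c)\in\{\pm 1\}^{2}$ as the paper, with one organisational difference worth noting: you first clear the top-left entry via the uniform left multiplication $A^{-ac}$, reducing every case to a fixed anti-diagonal matrix $N$, whereas the paper leaves $a$ in play and writes down four ad-hoc words such as $CB^{-1}AB^{a-1}$ or $BA^{-1}B^{1-a}$. Your reduction is tidier and makes the dependence on $\mathrm{det}(M_{0})$ (presence or absence of $C$) and on the sign of $b$ (presence or absence of $-I$) transparent; the paper's version, by contrast, delivers slightly shorter final words since it absorbs $a$ into a single power of $A$ or $B$ rather than prefixing $A^{ac}$.

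One small remark on your closing paragraph: the sentence ``the two cases with $\mathrm{det}(M_{0})=-1$ naturally carry a factor $-I$'' is not quite right. The factor $-I=(AB^{-1}A)^{2}$ appears exactly when $b=-1$, independently of the determinant; thus it shows up in one determinant-$+1$ case, namely $(b,c)=(-1,1)$, and in one determinant-$-1$ case, namely $(b,c)=(-1,-1)$. Your subsequent sentence, that $\mathrm{det}(M_{0})=-1$ is precisely what inserts a single copy of $C$, is the correct dichotomy. This is only a wording issue in the commentary; the case check itself is fine.
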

\begin{proof}
$M_{0}=\left(\begin{smallmatrix}a&b\\c&0\end{smallmatrix}\right)\implies\mathrm{det}\left(M_{0}\right)=-bc=\pm 1$. Thus, there are four possibilities:
\begin{equation*}
(b,c)\in\left\{(1,1),(-1,-1),(1,-1),(-1,1)\right\}
\end{equation*}
\begin{enumerate}[$(i)$]
\item $(b,c):=(1,1)\implies M_{0}=\left(\begin{smallmatrix}a&1\\1&0\end{smallmatrix}\right)\implies M_{0}\in GL_{2}\left(\mathbb{Z}\right)\setminus SL_{2}\left(\mathbb{Z}\right)$ as $\mathrm{det}\left(M_{0}\right)=-1$. We check that, $\forall a\in\mathbb{Z}$:
\begin{equation}\label{1}
C B^{-1}A B^{a-1}=\left(\begin{smallmatrix}a&1\\1&0\end{smallmatrix}\right)\in\langle A,B,C\rangle
\end{equation}
\item $(b,c):=(-1,-1)\implies M_{0}=\left(\begin{smallmatrix}a&-1\\-1&0\end{smallmatrix}\right)\implies M_{0}\in GL_{2}\left(\mathbb{Z}\right)\setminus SL_{2}\left(\mathbb{Z}\right)$ as $\mathrm{det}\left(M_{0}\right)=-1$. Note that $M_{0}=-\left(\begin{smallmatrix}-a&1\\1&0\end{smallmatrix}\right)$. Using $-I=(A B^{-1}A)^{2}$ as mentioned in the introduction and point $(i)$, we get:
\begin{equation}\label{2}
M_{0}=A B^{-1}A^{2}B^{-1}A C B^{-1}A B^{-a-1}\in\langle A,B,C\rangle
\end{equation}
\item $(b,c)=(1,-1)\implies M_{0}=\left(\begin{smallmatrix}a&1\\-1&0\end{smallmatrix}\right)\implies \mathrm{det}\left(M_{0}\right)=+1\implies M_{0}\in SL_{2}\left(\mathbb{Z}\right)$. We check that, $\forall a\in\mathbb{Z}$:
\begin{equation}\label{3}
M_{0}=A^{1-a}B^{-1}A\in\langle A,B\rangle\subseteq\langle A,B,C\rangle
\end{equation}
\item $(b,c)=(-1,1)\implies M_{0}=\left(\begin{smallmatrix}a&-1\\1&0\end{smallmatrix}\right)\implies \mathrm{det}\left(M_{0}\right)=+1\implies M_{0}\in SL_{2}\left(\mathbb{Z}\right)$. We check that, $\forall a\in\mathbb{Z}$:
\begin{equation}\label{4}
M_{0}=B A^{-1} B^{1-a}\in\langle A,B\rangle\subseteq\langle A,B,C\rangle
\end{equation}
\end{enumerate}
Conclusion: as per equations \eqref{1}, \eqref{2}, \eqref{3} and \eqref{4}, $M_{0}\in\langle A,B,C\rangle$. On top of that, equations \eqref{3} and \eqref{4} show that $M_{0}\in SL_{2}\left(\mathbb{Z}\right)\implies M_{0}\in\langle A,B\rangle$, as expected.
\end{proof}
\begin{lemma}[Some basic results on simple continued fractions]\label{lemma-3}
Let $\left[n_{1};n_{2},\ldots ,n_{j}\right]$ a simple and finite continued fraction:
\begin{equation*}
\frac{p_{j}}{q_{j}}=\left[n_{1};n_{2},\ldots ,n_{j}\right]=n_{1}+\cfrac{1}{n_{2} +\cfrac{1}{n_{3} +\cfrac{1}{n_{4} + \lastcfrac{1}{n_{j-1}+\frac{1}{n_{j}}}}}}
\end{equation*}
The \emph{convergents} are the rational numbers defined by $\frac{p_{i}}{q_{i}}:=\left[n_{1};n_{2},\ldots ,n_{i}\right]$, $\forall i\in\llbracket 1,\,j\rrbracket$ with the convention $\left(p_{0},q_{0}\right):=(1,0)$. Let's prove the following points:
\begin{enumerate}[$(i)$]
\item $\forall i\in\llbracket 2,\,j\rrbracket$, we have $p_{i}=n_{i}p_{i-1}+p_{i-2}$ and $q_{i}=n_{i}q_{i-1}+q_{i-2}$
\item $p_{i}q_{i-1}-p_{i-1}q_{i}=(-1)^{i}$, $\forall i\in\llbracket 1,\,j\rrbracket$
\item The convergents $\frac{p_{i}}{q_{i}}:=\left[n_{1};n_{2},\ldots ,n_{i}\right]$ are such that $p_{i}$ and $q_{i}$ are coprime numbers, $\forall i\in\llbracket 1,\,j\rrbracket$.
\item With $q_{0}:=0$, one has $q_{1}:=1\leq q_{2}$ and $q_{2}<q_{3}<\ldots q_{j}$. In particular, $q_{i}\geq 0$, $\forall i\in\llbracket 0,\,j\rrbracket$.
\item $\frac{p_{i}}{q_{i}}-\frac{p_{i-1}}{q_{i-1}}=\frac{(-1)^{i}}{q_{i}q_{i-1}}$, $\forall i\in\llbracket 2,\,j\rrbracket$
\end{enumerate}
\end{lemma}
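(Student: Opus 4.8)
The plan is to deduce all five items from a single matrix identity. For $n\in\mathbb{Z}$ write $\Lambda(n):=\left(\begin{smallmatrix}n&1\\1&0\end{smallmatrix}\right)$, and for $i\in\llbracket 1,\,j\rrbracket$ set $\Pi_{i}:=\Lambda(n_{1})\Lambda(n_{2})\cdots\Lambda(n_{i})$, with $\Pi_{0}:=I$. Define $p_{i},q_{i}$ by the recurrence of $(i)$ together with the initial data $(p_{0},q_{0})=(1,0)$ and the auxiliary values $(p_{-1},q_{-1})=(0,1)$, which force $p_{1}=n_{1}$ and $q_{1}=1$, in agreement with $\frac{p_{1}}{q_{1}}=[n_{1}]$. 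I claim that
\begin{equation*}
\Pi_{i}=\begin{pmatrix}p_{i}&p_{i-1}\\q_{i}&q_{i-1}\end{pmatrix}\qquad\text{for every }i\in\llbracket 0,\,j\rrbracket .
\end{equation*}
Granting this, $(i)$ is read off from the first column of the identity $\Pi_{i}=\Pi_{i-1}\Lambda(n_{i})$; $(ii)$ comes from taking determinants; $(iii)$ is an immediate corollary of $(ii)$; and $(iv)$, $(v)$ are short stand-alone arguments relying on the hypothesis $n_{i}\in\mathbb{N}^{\ast}$ for $i\geq 2$.

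First I would establish the matrix identity by induction on $i$. The base case $i=1$ is just $\Lambda(n_{1})=\left(\begin{smallmatrix}n_{1}&1\\1&0\end{smallmatrix}\right)=\left(\begin{smallmatrix}p_{1}&p_{0}\\q_{1}&q_{0}\end{smallmatrix}\right)$. For the inductive step, multiply $\Pi_{i-1}=\left(\begin{smallmatrix}p_{i-1}&p_{i-2}\\q_{i-1}&q_{i-2}\end{smallmatrix}\right)$ on the right by $\Lambda(n_{i})$: the first column becomes $\left(\begin{smallmatrix}n_{i}p_{i-1}+p_{i-2}\\n_{i}q_{i-1}+q_{i-2}\end{smallmatrix}\right)$, which is precisely $\left(\begin{smallmatrix}p_{i}\\q_{i}\end{smallmatrix}\right)$, while the second column stays $\left(\begin{smallmatrix}p_{i-1}\\q_{i-1}\end{smallmatrix}\right)$. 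To check that the $\frac{p_{i}}{q_{i}}$ so defined is genuinely the convergent $[n_{1};n_{2},\ldots,n_{i}]$, I would use that $\Lambda(n)$ acts on $\mathbb{P}^{1}$ as the fractional-linear map $z\mapsto n+\frac{1}{z}$ and that matrix multiplication corresponds to composition of such maps: applied to $\infty$, the product $\Pi_{i}$ produces $n_{i}$, then $n_{i-1}+\frac{1}{n_{i}}$, and so on, finally reaching $[n_{1};n_{2},\ldots,n_{i}]$, whereas its matrix form sends $\infty$ to $\frac{p_{i}}{q_{i}}$. (Alternatively one proves by induction on $i$ the sharper identity $[n_{1};n_{2},\ldots,n_{i-1},x]=\frac{xp_{i-1}+p_{i-2}}{xq_{i-1}+q_{i-2}}$ for every real $x>0$, the step using $[n_{1};\ldots,n_{i-1},x]=[n_{1};\ldots,n_{i-2},n_{i-1}+\frac{1}{x}]$, and then specialises $x=n_{i}$.)

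From the matrix identity, $(ii)$ is immediate: $p_{i}q_{i-1}-p_{i-1}q_{i}=\det\Pi_{i}=\prod_{k=1}^{i}\det\Lambda(n_{k})=(-1)^{i}$, since $\det\Lambda(n)=-1$. Then $(iii)$ drops out: any common divisor of $p_{i}$ and $q_{i}$ divides $p_{i}q_{i-1}-p_{i-1}q_{i}=\pm 1$, hence is a unit, so $\frac{p_{i}}{q_{i}}$ is in lowest terms — which, together with the sign information from $(iv)$, is what makes the phrase ``the reduced fraction $\frac{p_{i}}{q_{i}}$'' unambiguous. For $(iv)$ I would induct: $q_{1}=n_{1}q_{0}+q_{-1}=1$, and $q_{2}=n_{2}q_{1}+q_{0}=n_{2}\geq 1$ because $n_{2}\in\mathbb{N}^{\ast}$; and for $i\geq 3$, using $n_{i}\geq 1$ and the inductive positivity $q_{i-1}>0$, $q_{i-2}>0$, one gets $q_{i}=n_{i}q_{i-1}+q_{i-2}\geq q_{i-1}+q_{i-2}>q_{i-1}$, whence $q_{2}<q_{3}<\cdots<q_{j}$ and $q_{i}>0$ for all $i\geq 1$, with $q_{0}=0$. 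Finally $(v)$ is one line: for $i\in\llbracket 2,\,j\rrbracket$ the denominators $q_{i}$ and $q_{i-1}$ are positive, so $\frac{p_{i}}{q_{i}}-\frac{p_{i-1}}{q_{i-1}}=\frac{p_{i}q_{i-1}-p_{i-1}q_{i}}{q_{i}q_{i-1}}=\frac{(-1)^{i}}{q_{i}q_{i-1}}$ by $(ii)$.

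None of these computations is hard; the points that need attention are the bookkeeping of the auxiliary values $(p_{-1},q_{-1})=(0,1)$, so that the recurrence outputs $p_{1}=n_{1}$ and $q_{1}=1$ correctly, and the asymmetry that $n_{1}\in\mathbb{Z}$ is arbitrary while $n_{i}\in\mathbb{N}^{\ast}$ for $i\geq 2$ — it is precisely the latter constraint that drives the strict monotonicity in $(iv)$ and the positivity of the denominators used in $(v)$, so it must be invoked explicitly there. The one step I would write out with genuine care is the identification $\frac{p_{i}}{q_{i}}=[n_{1};n_{2},\ldots,n_{i}]$, i.e. the bridge between the algebraic recurrence and the continued fraction itself; everything around it is routine.
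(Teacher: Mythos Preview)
Your proof is correct and complete. The route, however, differs from the paper's in a meaningful way for parts $(i)$ and $(ii)$: you package the recursion into the single matrix identity $\Pi_{i}=\Lambda(n_{1})\cdots\Lambda(n_{i})=\left(\begin{smallmatrix}p_{i}&p_{i-1}\\q_{i}&q_{i-1}\end{smallmatrix}\right)$ and read everything off from it --- $(i)$ from the first column of $\Pi_{i}=\Pi_{i-1}\Lambda(n_{i})$, and $(ii)$ in one stroke from $\det\Pi_{i}=(-1)^{i}$ --- whereas the paper proves $(i)$ by a direct induction on the nesting identity $\left[n_{1};\ldots ,n_{i-1},n_{i}\right]=\left[n_{1};\ldots ,n_{i-1}+\tfrac{1}{n_{i}}\right]$ (this is exactly the ``alternative'' you sketch in parentheses) and then proves $(ii)$ by a second, separate induction using $(i)$. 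Your approach is cleaner and more conceptual: it unifies $(i)$ and $(ii)$, and the determinant argument for $(ii)$ avoids the telescoping computation entirely; the price is the introduction of the auxiliary pair $(p_{-1},q_{-1})=(0,1)$ and the need to argue separately that the recurrence-defined $p_{i}/q_{i}$ really equals the convergent $[n_{1};\ldots ,n_{i}]$, which the paper gets for free by working from the continued fraction side throughout. For $(iii)$, $(iv)$ and $(v)$ your arguments coincide with the paper's essentially verbatim.
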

\begin{proof}
\begin{enumerate}[$(i)$]
\item As $p_{0}=1$, $q_{1}=1$ and $\frac{p_{1}}{q_{1}}=\left[n_{1}\right]=\frac{n_{1}}{1}=n_{1}$, we have $p_{1}:=n_{1}$. Then, $n_{2}p_{1}+p_{0}=n_{2}n_{1}+1$. On the other side, $\frac{p_{2}}{q_{2}}=\left[n_{1};n_{2}\right]=n_{1}+\frac{1}{n_{2}}=\frac{n_{1}n_{2}+1}{n_{2}}\implies\left(p_{2},q_{2}\right)=\left(n_{1}n_{2}+1,n_{2}\right)$ and this shows that $(i)$ is valid for $i:=2$. Suppose that $(i)$ is valid for $i>2$; we have:
\begin{equation*}
\frac{p_{i}}{q_{i}}=\left[n_{1},n_{2},\ldots ,n_{i-1},n_{i}\right]=n_{1}+\cfrac{1}{n_{2} +\cfrac{1}{n_{3} +\cfrac{1}{n_{4} + \lastcfrac{1}{n_{i-1}+\frac{1}{n_{i}}}}}}
\end{equation*}
And we see directly that $\left[n_{1};n_{2},\ldots ,n_{i-1}+\frac{1}{n_{i}}\right]=\left[n_{1};n_{2},\ldots ,n_{i-1},n_{i}\right]$. Then,
\begin{align*}
\frac{p_{i}}{q_{i}}=\left[n_{1};n_{2},\ldots ,n_{i-1},n_{i}\right]&=\left[n_{1};n_{2},\ldots ,n_{i-1}+\tfrac{1}{n_{i}}\right]\\&=\frac{p_{i-1}\left(n_{1},n_{2},\ldots ,n_{i-1}+\tfrac{1}{n_{i}}\right)}{q_{i-1}\left(n_{1},n_{2},\ldots ,n_{i-1}+\tfrac{1}{n_{i}}\right)}\\
&=\frac{\left(n_{i-1}+\tfrac{1}{n_{i}}\right)p_{i-2}+p_{i-3}}{\left(n_{i-1}+\tfrac{1}{n_{i}}\right)q_{i-2}+q_{i-3}}\qquad\textnormal{(by inductive hypothesis)}\\
&=\frac{\left(n_{i-1}p_{i-2}+p_{i-3}\right)+\tfrac{1}{n_{i}}p_{i-2}}{\left(n_{i-1}q_{i-2}+q_{i-3}\right)+\tfrac{1}{n_{i}}q_{i-2}}\\
&=\frac{p_{i-1}+\tfrac{1}{n_{i}}p_{i-2}}{q_{i-1}+\tfrac{1}{n_{i}}q_{i-2}}\qquad\textnormal{(by inductive hypothesis)}\\
&=\frac{n_{i}p_{i-1}+p_{i-2}}{n_{i}q_{i-1}+q_{i-2}}
\end{align*}
\item For $i:=1$, $(ii)$ is verified, as $p_{1}q_{0}-p_{0}q_{1}=n_{1}\cdot 0-1\cdot 1=-1=(-1)^{1}$. Suppose $(ii)$ is true for $i>1$; one gets:
\begin{align*}
p_{i+1}q_{i}-p_{i}q_{i+1}&=\left(n_{i+1}p_{i}+p_{i-1}\right)q_{i}-p_{i}\left(n_{i+1}q_{i}+q_{i-1}\right)\qquad\textnormal{(using }(i))\\
&=n_{i+1}p_{i}q_{i}+p_{i-1}q_{i}-n_{i+1}q_{i}p_{i}-p_{i}q_{i-1}=-\left(p_{i}q_{i-1}-p_{i-1}q_{i}\right)\\
&=-(-1)^{i}\qquad\textnormal{(by inductive hypothesis)}\\
&=(-1)^{i+1}
\end{align*}
\item Both recurrence relations of point $(i)$ show that $n_{i}\in\mathbb{Z}\implies (p_{i},q_{i})\in\mathbb{Z}^{2}$, $\forall i\in\llbracket 1,\,j\rrbracket$. Let's write point $(ii)$ as $p_{i}\left((-1)^{i}q_{i-1}\right)+q_{i}\left((-1)^{i-1}p_{i-1}\right)=1$, $\forall i\in\llbracket 1,\,j\rrbracket$ which is a B\'ezout relation. Therefore, $p_{i}$ and $q_{i}$ are coprime numbers, $\forall i\in\llbracket 1,\,j\rrbracket$.
\item Using the recurrence relation $q_{i}=n_{1}q_{i-1}+q_{i-2}$, $\forall i\in\llbracket 2,\,j\rrbracket$ from point $(i)$ with $\left(q_{0},q_{1}\right)=\left(0,1\right)$, we show, by induction, that $q_{i}\geq 1$, $\forall i\in\llbracket 1,\,j\rrbracket$. Recall that $n_{1}\in\mathbb{Z}$ and $n_{i}\in\mathbb{N}^{\ast}$, $\forall i\in\llbracket 2,\,j\rrbracket$. For $i:=2$, we get $q_{2}=n_{2}q_{1}+q_{0}=n_{2}\cdot 1+0=n_{2}\geq 1$. Suppose that $q_{i}\geq 1$ for $i>2$, hence $q_{i+1}=n_{i+1}q_{i}+q_{i-1}$; by induction hypothesis, $q_{i-1}\geq 1$, $q_{i}\geq 1$ and $n_{i+1}\in\mathbb{N}^{\ast}$. Therefore, $n_{i+1}q_{i}+q_{i-1}\geq 1$; i.e, $q_{i+1}\geq 1$ and this shows that $q_{i}\geq 1$, $\forall i\in\llbracket 1,\,j\rrbracket$. Moreover, $n_{i}q_{i-1}+q_{i-2}\geq q_{i-1}+q_{i-2}$ when $i\geq 2$. Using point $(i)$, we get $q_{i}\geq q_{i-1}+q_{i-2}$, $\forall i\in\llbracket 2,\,j\rrbracket$. As $q_{i-2}\geq 1$ whenever $i\geq 3$, we get finally $q_{i}\geq q_{i-1}+q_{i-2}> q_{i-1}$, $\forall i\in\llbracket 3,\,j\rrbracket$.
\item Point $(iv)$ showed, in particular, that $q_{i}\neq 0$, $\forall i\in\llbracket 1,\,j\rrbracket$. Hence, $q_{i}q_{i-1}\neq 0$, $\forall i\in\llbracket 2,\,j\rrbracket$. It's then possible to divide point $(ii)$ relation by $q_{i}q_{i-1}$.
\end{enumerate}
\end{proof}
We will also make use of the following elementary result:
\begin{lemma}\label{lemma-4}
\begin{equation*}
(-1)^{\left\lfloor\frac{k+1}{2}\right\rfloor}=(-1)^{k}(-1)^{\left\lfloor\frac{k}{2}\right\rfloor}\qquad\forall k\in\mathbb{N}
\end{equation*}
\end{lemma}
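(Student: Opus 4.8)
The plan is to prove the identity by a direct case analysis on the parity of $k$, which is the most transparent route: both floor terms $\left\lfloor\frac{k}{2}\right\rfloor$ and $\left\lfloor\frac{k+1}{2}\right\rfloor$ become trivial to evaluate once $k$ is written as $2m$ or as $2m+1$.

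First I would treat $k=2m$ with $m\in\mathbb{N}$. Here $\left\lfloor\frac{k}{2}\right\rfloor=m$ and $\left\lfloor\frac{k+1}{2}\right\rfloor=\left\lfloor m+\frac12\right\rfloor=m$, so the left-hand side equals $(-1)^m$, while the right-hand side equals $(-1)^{2m}(-1)^m=(-1)^m$; the two agree. Next I would treat $k=2m+1$ with $m\in\mathbb{N}$. Now $\left\lfloor\frac{k}{2}\right\rfloor=m$ and $\left\lfloor\frac{k+1}{2}\right\rfloor=\left\lfloor\frac{2m+2}{2}\right\rfloor=m+1$, so the left-hand side is $(-1)^{m+1}$ and the right-hand side is $(-1)^{2m+1}(-1)^m=-(-1)^m=(-1)^{m+1}$; again they agree. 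Since every $k\in\mathbb{N}$ is of exactly one of these two forms, the identity holds in general.

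I do not expect any real obstacle: the only points requiring a moment's care are the two floor evaluations $\left\lfloor m+\frac12\right\rfloor=m$ and $\left\lfloor\frac{2m+2}{2}\right\rfloor=m+1$, and the sign bookkeeping $(-1)^{2m}=1$, $(-1)^{2m+1}=-1$. An alternative would be to write $k=4q+r$ with $r\in\{0,1,2,3\}$ and read off the four-periodic sign pattern of $(-1)^{\lfloor k/2\rfloor}$ directly, but the parity split above is shorter and suffices for how Lemma~\ref{lemma-4} will be used in the proof of \eqref{eq-1}.
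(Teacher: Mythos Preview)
Your proof is correct. The two-case parity split $k=2m$ versus $k=2m+1$ handles everything cleanly, and the floor and sign evaluations you highlight are the only places where any care is needed.

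The paper takes exactly the alternative route you mention at the end: it writes $k=4k'+r$ with $r\in\{0,1,2,3\}$, computes $(-1)^{\lfloor k/2\rfloor}$ in each of the four cases to obtain the $4$-periodic pattern $+1,+1,-1,-1$, deduces the shifted pattern for $(-1)^{\lfloor (k+1)/2\rfloor}$, and then compares with $(-1)^{k}(-1)^{\lfloor k/2\rfloor}$ residue by residue. Your mod~$2$ split is shorter because the parity of $\lfloor k/2\rfloor$ itself is never needed---only the \emph{difference} $\lfloor (k+1)/2\rfloor-\lfloor k/2\rfloor$ matters, and that is $0$ when $k$ is even and $1$ when $k$ is odd, which is precisely what you exploit. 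The paper's mod~$4$ analysis does have the minor side benefit of making the sign pattern of $(-1)^{\lfloor k/2\rfloor}$ fully explicit (cf.\ its equation~\eqref{lemma-4-proof-1-equ}), but this extra information is not used elsewhere in the argument, so your approach loses nothing.
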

\begin{proof}
Recall that $\forall x\in\mathbb{R}$ and $\forall n\in\mathbb{Z}$, one has $\left\lfloor x+n\right\rfloor =\left\lfloor x\right\rfloor +n$. Let $\left(k,k^{\prime}\right)\in\mathbb{N}^{2}$ such $k=4 k^{\prime}$, then $\left\lfloor\frac{k}{2}\right\rfloor =\left\lfloor\frac{4 k^{\prime}}{2}\right\rfloor =2 k^{\prime}\implies (-1)^{\left\lfloor\frac{k}{2}\right\rfloor}=(-1)^{2 k^{\prime}}=+1$. Suppose now $\left(k,k^{\prime}\right)\in\mathbb{N}^{2}$ such $k=4 k^{\prime}+1$; then $\left\lfloor\frac{k}{2}\right\rfloor =\left\lfloor\frac{4 k^{\prime}+1}{2}\right\rfloor =\left\lfloor 2 k^{\prime}+\frac{1}{2}\right\rfloor =2 k^{\prime}+\left\lfloor\frac{1}{2}\right\rfloor =2 k^{\prime}\implies (-1)^{\left\lfloor\frac{k}{2}\right\rfloor}=(-1)^{2 k^{\prime}}=+1$. Suppose now $\left(k,k^{\prime}\right)\in\mathbb{N}^{2}$ such $k=4 k^{\prime}+2$; then $\left\lfloor\frac{k}{2}\right\rfloor =\left\lfloor\frac{4 k^{\prime}+2}{2}\right\rfloor =\left\lfloor 2 k^{\prime}+1\right\rfloor =2 k^{\prime}+1\implies (-1)^{\left\lfloor\frac{k}{2}\right\rfloor}=(-1)^{2 k^{\prime}+1}=-1$. Finally, suppose $\left(k,k^{\prime}\right)\in\mathbb{N}^{2}$ such $k=4 k^{\prime}+3$; then $\left\lfloor\frac{k}{2}\right\rfloor =\left\lfloor\frac{4 k^{\prime}+3}{2}\right\rfloor =\left\lfloor\frac{\left(4 k^{\prime}+2\right)+1}{2}\right\rfloor =\left\lfloor 2 k^{\prime}+1+\frac{1}{2}\right\rfloor =2 k^{\prime}+1+\left\lfloor\frac{1}{2}\right\rfloor =2 k^{\prime}+1\implies (-1)^{\left\lfloor\frac{k}{2}\right\rfloor}=(-1)^{2 k^{\prime}+1}=-1$. Hence we showed that, $\forall k\in\mathbb{N}$:
\begin{equation}\label{lemma-4-proof-1-equ}
(-1)^{\left\lfloor\frac{k}{2}\right\rfloor}=\left\{
\begin{array}{cl}
1&\textnormal{if }k\equiv 0\textnormal{ or }1\mod{4}\\
-1&\textnormal{if }k\equiv 2\textnormal{ or }3\mod{4}
\end{array}
\right.\implies
(-1)^{\left\lfloor\frac{k+1}{2}\right\rfloor}=\left\{
\begin{array}{cl}
1&\textnormal{if }k\equiv 0\textnormal{ or }3\mod{4}\\
-1&\textnormal{if }k\equiv 1\textnormal{ or }2\mod{4}
\end{array}
\right.
\end{equation}
Of course, $\forall k\in\mathbb{N}$, we have:
\begin{equation*}
(-1)^{k}=\left\{
\begin{array}{cl}
1&\textnormal{if }k\equiv 0\textnormal{ or }2\mod{4}\\
-1&\textnormal{if }k\equiv 2\textnormal{ or }3\mod{4}
\end{array}
\right.
\end{equation*}
That means $(-1)^{k}(-1)^{\left\lfloor\frac{k}{2}\right\rfloor}$ equals $+1$ when $\left((-1)^{\left\lfloor\frac{k}{2}\right\rfloor},(-1)^{k}\right)=(1,1)$ or $(-1,-1)$ and this is the case if and only if $k\equiv 0$ or $3\mod{4}$ and this is exactly what shows equation \eqref{lemma-4-proof-1-equ}.
\end{proof}
\paragraph{The main result}
Let $M:=\left(\begin{smallmatrix}a&b\\c&d\end{smallmatrix}\right)\in GL_{2}\left(\mathbb{Z}\right)$ with $d\neq 0$. Let's define, $\forall k\in\llbracket 1,\,j\rrbracket$,
\begin{equation}\label{eq-4}
\left\{
\begin{array}{l}
\alpha_{k}(b,d):=(-1)^{\left\lfloor\frac{k}{2}\right\rfloor}\left(q_{k}b-p_{k}d+(-1)^{k}\left(q_{k-1}b-p_{k-1}d\right)\right)\\
\gamma_{k}(b,d):=(-1)^{\left\lfloor\frac{k}{2}\right\rfloor}\left(p_{k}d-q_{k}b\right)
\end{array}
\right.
\end{equation}
where, $\frac{p_{k}}{q_{k}}:=\left[n_{1};n_{2},\ldots ,n_{k}\right]$, $\forall k\in\llbracket 1,\,j\rrbracket$ are the convergents of the continued fraction $\frac{b}{d}=\left[n_{1};n_{2},\ldots ,n_{j}\right]$. Let's also define:
\begin{equation}\label{eq-5}
P_{0}:=A^{-1}M A^{-1}B\qquad\textnormal{and }
\qquad P_{k}:=\begin{pmatrix}
\alpha_{k}(b,d)&\alpha_{k}(b-a,d-c)\\
\gamma_{k}(b,d)&\gamma_{k}(b-a,d-c)
\end{pmatrix}\quad\forall k\geq 1
\end{equation}
Then, 
\begin{enumerate}[$(i)$]
\item $P_{k}\in GL_{2}\left(\mathbb{Z}\right)$, $\forall k\in\llbracket 0,\,j\rrbracket$ 
\item $P_{j}=(-1)^{\left\lfloor\frac{j}{2}\right\rfloor}\mathrm{sgn}(d)\left(C A^{2}\right)^{\frac{1-\mathrm{det}\left(M\right)}{2}}A^{b_{j}}$; where $b_{j}:=(-1)^{j}\mathrm{sgn}(d)\left(p_{j-1}c-q_{j-1}a\right)$
\item $P_{k}=B^{-1}A^{2+(-1)^{k}n_{k}}P_{k-1}$, $\forall k\in\llbracket 1,\,j\rrbracket$
\end{enumerate}
\begin{proof}
\begin{enumerate}[$(i)$]
\item For $k:=0$, it is clear, from its definition ($GL_{2}\left(\mathbb{Z}\right)$ is a group), that $P_{0}\in GL_{2}\left(\mathbb{Z}\right)$. Suppose $k>0$; from their definitions \eqref{eq-4}, we see that the coefficients of $P_{k}$ are integers. Therefore, the only thing we have to check is $\mathrm{det}\left(P_{k}\right)=\pm 1$, $\forall k\in\llbracket 1,\,j\rrbracket$. Let's do it:
\begin{align*}
\mathrm{det}\left(P_{k}\right)=&\,\alpha_{k}(b,d)\gamma_{k}(b-a,d-c)-\gamma_{k}(b,d)\alpha_{k}(b-a,d-c)\\
=&\,(-1)^{2\left\lfloor\frac{k}{2}\right\rfloor}\left(q_{k}b-p_{k}d+(-1)^{k}\left(q_{k-1}b-p_{k-1}d\right)\right)\left(p_{k}(d-c)-q_{k}(b-a)\right)\\
&-(-1)^{2\left\lfloor\frac{k}{2}\right\rfloor}\left(p_{k}d-q_{k}b\right)\left(q_{k}(b-a)-p_{k}(d-c)+(-1)^{k}\left(q_{k-1}(b-a)-p_{k-1}(d-c)\right)\right)\\
=&\,q_{k}p_{k}b(d-c)-q_{k}^{2}b(b-a)-p_{k}^{2}d(d-c)+p_{k}q_{k}d(b-a)+(-1)^{k}p_{k}q_{k-1}b(d-c)\\
&-(-1)^{k}q_{k}q_{k-1}b(b-a)-(-1)^{k}p_{k}p_{k-1}d(d-c)+(-1)^{k}p_{k-1}q_{k}d(b-a)-p_{k}q_{k}d(b-a)+p_{k}^{2}d(d-c)\\
&-(-1)^{k}p_{k}q_{k-1}d(b-a)+(-1)^{k}p_{k}p_{k-1}d(d-c)+q_{k}^{2}b(b-a)-q_{k}p_{k}b(d-c)+(-1)^{k}q_{k}q_{k-1}b(b-a)\\
&-(-1)^{k}q_{k}p_{k-1}b(d-c)\\
=&\,(-1)^{k}b(d-c)\left(p_{k}q_{k-1}-q_{k}p_{k-1}\right)-(-1)^{k}d(b-a)\left(p_{k}q_{k-1}-p_{k-1}q_{k}\right)\\
=&\,(-1)^{k}\left(p_{k}q_{k-1}-q_{k}p_{k-1}\right)(ad-bc)\\
=&\,(-1)^{k}(-1)^{k}\mathrm{det}\left(M\right)\qquad\left(\textnormal{using lemma \eqref{lemma-3}\textnormal{, point }}(ii)\right)\\
=&\,\mathrm{det}\left(M\right)\\
=&\,\pm 1\qquad\left(\textnormal{as }M\in GL_{2}\left(\mathbb{Z}\right)\right)
\end{align*}
\item Using \eqref{eq-1-0}, we get directly $\gamma_{j}(b,d)=0$ and this makes $P_{j}$ upper triangular. We have:
\begin{align}\label{eq-6}
\alpha_{j}(b,d)&=(-1)^{\left\lfloor\frac{j}{2}\right\rfloor}\left(q_{j}b-p_{j}d+(-1)^{j}\left(q_{j-1}b-p_{j-1}d\right)\right)\nonumber\\
&=(-1)^{\left\lfloor\frac{j}{2}\right\rfloor}(-1)^{j}\left(q_{j-1}b-p_{j-1}d\right)\qquad\left(\textnormal{using equation \eqref{eq-1-0}}\right)\nonumber\\
&=(-1)^{\left\lfloor\frac{j}{2}\right\rfloor}(-1)^{j}\left(q_{j-1}\left(d\frac{p_{j}}{q_{j}}\right)-p_{j-1}d\right)\qquad\left(\textnormal{using equation \eqref{eq-1-0} again}\right)\nonumber\\
&=(-1)^{\left\lfloor\frac{j}{2}\right\rfloor}(-1)^{j}\left(p_{j}q_{j-1}-p_{j-1}q_{j}\right)\frac{d}{q_{j}}\nonumber\\
&=(-1)^{\left\lfloor\frac{j}{2}\right\rfloor}(-1)^{j}(-1)^{j}\frac{d}{q_{j}}\qquad\left(\textnormal{using lemma \eqref{lemma-3}\textnormal{, point }}(ii)\right)\nonumber\\
&=(-1)^{\left\lfloor\frac{j}{2}\right\rfloor}\frac{d}{q_{j}}
\end{align}
From point $(i)$, we know that $P_{j}\in GL_{2}\left(\mathbb{Z}\right)$. Therefore, $\alpha_{j}(b,d)\in\mathbb{Z}$ with $d\in\mathbb{Z}^{\ast}$ and this means that $q_{j}$ divides $d$ (let's note this $q_{j}\mid d$).
Also, 
\begin{align}\label{eq-8}
\gamma_{j}(b-a,d-c)=&(-1)^{\left\lfloor\frac{j}{2}\right\rfloor}\left(p_{j}(d-c)-q_{j}(b-a)\right)=(-1)^{\left\lfloor\frac{j}{2}\right\rfloor}\left(p_{j}d-p_{j}c-q_{j}b+q_{j}a\right)\nonumber\\
=&(-1)^{\left\lfloor\frac{j}{2}\right\rfloor}\left(q_{j}a-p_{j}c\right)\qquad\left(\textnormal{using equation \eqref{eq-1-0}}\right)\nonumber\\
=&(-1)^{\left\lfloor\frac{j}{2}\right\rfloor}\left(q_{j}a-\left(\frac{q_{j}}{d}b\right)c\right)\qquad\left(\textnormal{using equation \eqref{eq-1-0} again}\right)\nonumber\\
=&(-1)^{\left\lfloor\frac{j}{2}\right\rfloor}\frac{q_{j}}{d}\left(ad-bc\right)\nonumber\\
=&(-1)^{\left\lfloor\frac{j}{2}\right\rfloor}\frac{q_{j}}{d}\mathrm{det}(M)
\end{align}
Using the same argument as for $\eqref{eq-6}$, we get $d\mid q_{j}$. So, as $q_{j}>0$ (that is lemma \eqref{lemma-3}, point $(iv)$), we have $\big(q_{j}\mid d$ and $d\mid q_{j}\big)\implies d=\mathrm{sgn}(d)q_{j}$. We have found:
\begin{equation}\label{eq-7}
\alpha_{j}(b,d)=(-1)^{\left\lfloor\frac{j}{2}\right\rfloor}\mathrm{sgn}(d)
\end{equation}
And,
\begin{equation}\label{eq-9}
\gamma_{j}(b-a,d-c)=(-1)^{\left\lfloor\frac{j}{2}\right\rfloor}\mathrm{sgn}(d)\mathrm{det}(M)
\end{equation}
Finally,
\begin{small}
\begin{align}\label{eq-10}
\alpha_{j}\left(b-a,d-c\right)=&\,(-1)^{\left\lfloor\frac{j}{2}\right\rfloor}\left(q_{j}(b-a)-p_{j}(d-c)+(-1)^{j}\left(q_{j-1}(b-a)-p_{j-1}(d-c)\right)\right)\nonumber\\
=&-\gamma_{j}(b-a,d-c)+(-1)^{\left\lfloor\frac{j}{2}\right\rfloor}(-1)^{j}\left(q_{j-1}(b-a)-p_{j-1}(d-c)\right)\nonumber\\
=&-\gamma_{j}(b-a,d-c)+(-1)^{\left\lfloor\frac{j}{2}\right\rfloor}(-1)^{j}\left(q_{j-1}\left(d\frac{p_{j}}{q_{j}}\right)-q_{j-1}a-p_{j-1}d+p_{j-1}c\right)\qquad\left(\textnormal{using eq. \eqref{eq-1-0}}\right)\nonumber\\
=&-\gamma_{j}(b-a,d-c)+(-1)^{\left\lfloor\frac{j}{2}\right\rfloor}(-1)^{j}\left(\frac{d}{q_{j}}\left(p_{j}q_{j-1}-q_{j}p_{j-1}\right)+p_{j-1}c-q_{j-1}a\right)\nonumber\\
=&-\gamma_{j}(b-a,d-c)+(-1)^{\left\lfloor\frac{j}{2}\right\rfloor}(-1)^{j}\left(\frac{d}{q_{j}}(-1)^{j}+p_{j-1}c-q_{j-1}a\right)\qquad\left(\textnormal{using lemma \eqref{lemma-3}\textnormal{, point }}(ii)\right)\nonumber\\
=&-\gamma_{j}(b-a,d-c)+(-1)^{\left\lfloor\frac{j}{2}\right\rfloor}\frac{d}{q_{j}}+(-1)^{\left\lfloor\frac{j}{2}\right\rfloor}(-1)^{j}\left(p_{j-1}c-q_{j-1}a\right)\nonumber\\
=&-\gamma_{j}(b-a,d-c)+(-1)^{\left\lfloor\frac{j}{2}\right\rfloor}\mathrm{sgn}(d)+(-1)^{\left\lfloor\frac{j}{2}\right\rfloor}(-1)^{j}\left(p_{j-1}c-q_{j-1}a\right)\qquad\left(\textnormal{using eq. \eqref{eq-6} and \eqref{eq-7}}\right)\nonumber\\
=&-(-1)^{\left\lfloor\frac{j}{2}\right\rfloor}\mathrm{sgn}(d)\mathrm{det}(M)+(-1)^{\left\lfloor\frac{j}{2}\right\rfloor}\mathrm{sgn}(d)+(-1)^{\left\lfloor\frac{j}{2}\right\rfloor}(-1)^{j}\left(p_{j-1}c-q_{j-1}a\right)\qquad\left(\textnormal{using eq. \eqref{eq-9}}\right)\nonumber\\
=&(-1)^{\left\lfloor\frac{j}{2}\right\rfloor}\mathrm{sgn}(d)\left(1-\mathrm{det}(M)+(-1)^{j}\mathrm{sgn}(d)\left(p_{j-1}c-q_{j-1}a\right)\right)\qquad\left(\textnormal{as }\left(\mathrm{sgn}(d)\right)^{2}=\mathrm{sgn}(d)\right)
\end{align}
\end{small}
Putting equations \eqref{eq-7}, \eqref{eq-9} and \eqref{eq-10} together, we found:
\begin{equation}\label{eq-11}
P_{j}=(-1)^{\left\lfloor\frac{j}{2}\right\rfloor}\mathrm{sgn}(d)\begin{pmatrix}
1&1-\mathrm{det}(M)+(-1)^{j}\mathrm{sgn}(d)\left(p_{j-1}c-q_{j-1}a\right)\\
0&\mathrm{det}(M)
\end{pmatrix}
\end{equation}
Let's write $b_{j}:=(-1)^{j}\mathrm{sgn}(d)\left(p_{j-1}c-q_{j-1}a\right)$, we get:
\begin{enumerate}[$(1)$]
\item $M\in SL_{2}\left(\mathbb{Z}\right)\implies\mathrm{det}\left(M\right)=1$; then, using lemma \eqref{lemma-1}, equation \eqref{eq-11} becomes:
\begin{equation}\label{P+}
P_{j}^{+}=(-1)^{\left\lfloor\frac{j}{2}\right\rfloor}\mathrm{sgn}(d)\begin{pmatrix}
1&b_{j}\\
0&1
\end{pmatrix}=(-1)^{\left\lfloor\frac{j}{2}\right\rfloor}\mathrm{sgn}(d)A^{b_{j}}
\end{equation}
\item $M\in GL_{2}\left(\mathbb{Z}\right)\setminus SL_{2}\left(\mathbb{Z}\right)\implies\mathrm{det}\left(M\right)=-1$; note that, $\forall n\in\mathbb{Z}$, $C A^{n}=\left(\begin{smallmatrix}1&0\\0&-1\end{smallmatrix}\right)\left(\begin{smallmatrix}1&n\\0&1\end{smallmatrix}\right)=\left(\begin{smallmatrix}1&n\\0&-1\end{smallmatrix}\right)$. Therefore, equation \eqref{eq-11} becomes:
\begin{equation}\label{P-}
P_{j}^{-}=(-1)^{\left\lfloor\frac{j}{2}\right\rfloor}\mathrm{sgn}(d)\begin{pmatrix}
1&2+b_{j}\\
0&-1
\end{pmatrix}=(-1)^{\left\lfloor\frac{j}{2}\right\rfloor}\mathrm{sgn}(d) C A^{2+b_{j}}
\end{equation}
\end{enumerate}
If we want to put equations \eqref{P+} and \eqref{P-} together, we note that $\frac{1-\mathrm{det}\left(M\right)}{2}= 0$ when $M\in SL_{2}\left(\mathbb{Z}\right)$ and $\frac{1-\mathrm{det}\left(M\right)}{2}= 1$ when $M\in GL_{2}\left(\mathbb{Z}\right)\setminus SL_{2}\left(\mathbb{Z}\right)$. Therefore,
\begin{align}\label{P}
P_{j}=&(-1)^{\left\lfloor\frac{j}{2}\right\rfloor}\mathrm{sgn}(d)\left(C A^{2}\right)^{\frac{1-\mathrm{det}\left(M\right)}{2}}A^{b_{j}}\nonumber\\
=&\left(A B^{-1}A\right)^{1-(-1)^{\left\lfloor\frac{j}{2}\right\rfloor}\mathrm{sgn}(d)}\left(C A^{2}\right)^{\frac{1-\mathrm{det}\left(M\right)}{2}}A^{b_{j}}\qquad\left(\textnormal{using equation \eqref{factor}}\right)
\end{align}
\item Recall equation \eqref{eq-5}; we have, by definition, $P_{0}=A^{-1}M A^{-1}B$. By direct calculation, we get:
\begin{equation}\label{eq-13}
P_{0}:=\begin{pmatrix}
b-d&b+c-(a+d)\\
d&d-c
\end{pmatrix}
\end{equation}
By induction on $k\geq 1$, we will show that $P_{k}=B^{-1}A^{2+(-1)^{k}n_{k}}P_{k-1}$, $\forall k\in\llbracket 1,\,j\rrbracket$.
\begin{enumerate}[$\bullet$]
\item Let $k:=1$; on one side, we have:
\begin{small}
\begin{align}\label{eq-14}
B^{-1}A^{2+(-1)^{1}n_{1}}P_{0}&=\begin{pmatrix}
1 & 0\\
-1 & 1
\end{pmatrix}\begin{pmatrix}
1 & 2-n_{1}\\
0 & 1
\end{pmatrix}\begin{pmatrix}
b-d & c+b-a-d \\
d & d-c
\end{pmatrix}=\begin{pmatrix}
1 & 2- n_{1} \\
-1 & n_{1}-1
\end{pmatrix}\begin{pmatrix}
b-d & c+b-a-d \\
d & d-c
\end{pmatrix}\nonumber\\
&=\begin{pmatrix}
b+d-d n_{1} & -c+d+b-a+(c-d)n_{1} \\
-b+d n_{1} & -b+a+(d-c) n_{1}
\end{pmatrix}
\end{align}
\end{small}
On the other side, 
\begin{align}\label{eq-15}
P_{1}&=\begin{pmatrix}
\alpha_{1} & \beta_{1} \\
\gamma_{1} & \delta_{1}
\end{pmatrix}\nonumber\\
&=(-1)^{\left\lfloor\frac{1}{2}\right\rfloor}\begin{pmatrix}
\left(q_{1}b-p_{1}d+(-1)^{1}\left(q_{0}b-p_{0}d\right)\right) & \left(q_{1}(b-a)-p_{1}(d-c)+(-1)^{1}\left(q_{0}(b-a)-p_{0}(d-c)\right)\right)\\
\left(p_{1}d-q_{1}b\right) & \left(p_{1}(d-c)-q_{1}(b-a)\right)\end{pmatrix}\nonumber\\
&=\begin{pmatrix}
b-n_{1}d+d & b-a-n_{1}(d-c)+(d-c) \\
n_{1}d -b & n_{1}(d-c)-(b-a)
\end{pmatrix}\qquad\left(\textnormal{using }\left(\begin{smallmatrix}
p_{0} & p_{1} \\
q_{0} & q_{1}
\end{smallmatrix}\right)=\left(\begin{smallmatrix}
1 & n_{1} \\
0 & 1
\end{smallmatrix}\right)\right)
\end{align}
The initialisation of the induction is valid as equations \eqref{eq-14} and \eqref{eq-15} are the same.
\item Suppose that $P_{k}=B^{-1}A^{2+(-1)^{k}n_{k}}P_{k-1}$ is true for $k>1$; we will show that it remains true for $k+1$:
\begin{small}
\begin{align*}
B^{-1}A^{2+(-1)^{k+1}n_{k+1}}P_{k}&=\begin{pmatrix}
1 & 0\\
-1 & 1
\end{pmatrix}\begin{pmatrix}
1 & 2+(-1)^{k+1}n_{k+1}\\
0 & 1
\end{pmatrix}P_{k}\\
&=\begin{pmatrix}
1 & 0\\
-1 & 1
\end{pmatrix}\begin{pmatrix}
1 & 2+(-1)^{k+1}n_{k+1}\\
0 & 1
\end{pmatrix}\begin{pmatrix}
\alpha_{k}(b,d)&\alpha_{k}(b-a,d-c)\\
\gamma_{k}(b,d)&\gamma_{k}(b-a,d-c)
\end{pmatrix}\qquad\left(\textnormal{by inductive hyp.}\right)\\
&=\begin{pmatrix}
1 & 2+(-1)^{k+1}n_{k+1}\\
-1 & -1+(-1)^{k+2}n_{k+1}
\end{pmatrix}\begin{pmatrix}
\alpha_{k}(b,d)&\alpha_{k}(b-a,d-c)\\
\gamma_{k}(b,d)&\gamma_{k}(b-a,d-c)
\end{pmatrix}:=\begin{pmatrix}
s & t\\
u & v
\end{pmatrix}
\end{align*}
\end{small}
We will show that $\left(\begin{smallmatrix}
s & t\\
u & v
\end{smallmatrix}\right)=\left(\begin{smallmatrix}
\alpha_{k+1}(b,d)&\alpha_{k+1}(b-a,d-c)\\
\gamma_{k+1}(b,d)&\gamma_{k+1}(b-a,d-c)
\end{smallmatrix}\right)$:
\begin{small}\begin{align*}
s&=1\cdot\alpha_{k}(b,d)+\left(2+(-1)^{k+1}n_{k+1}\right)\gamma_{k}(b,d)\\
&=(-1)^{\left\lfloor\frac{k}{2}\right\rfloor}\left(q_{k}b-p_{k}d+(-1)^{k}\left(q_{k-1}b-p_{k-1}d\right)
+\left(2+(-1)^{k+1}n_{k+1}\right)\left(p_{k}d-q_{k}b\right)\right)\\
&=(-1)^{\left\lfloor\frac{k}{2}\right\rfloor}\left(q_{k}b-p_{k}d+(-1)^{k}q_{k-1}b-(-1)^{k}p_{k-1}d+2p_{k}d-2q_{k}b+(-1)^{k+1}n_{k+1}p_{k}d-(-1)^{k+1}n_{k+1}q_{k}b\right)\\
&=(-1)^{\left\lfloor\frac{k}{2}\right\rfloor}\left(p_{k}d-q_{k}b+(-1)^{k+1}d\left(n_{k+1}p_{k}+p_{k-1}\right)-(-1)^{k+1}b\left(n_{k+1}q_{k}+q_{k-1}\right)\right)\\
&=(-1)^{\left\lfloor\frac{k}{2}\right\rfloor}\left(p_{k}d-q_{k}b+(-1)^{k+1}\left(d p_{k+1}-b q_{k+1}\right)\right)\qquad\left(\textnormal{using lemma \eqref{lemma-3}\textnormal{, point }}(i)\right)\\
&=(-1)^{\left\lfloor\frac{k}{2}\right\rfloor}(-1)^{k}\left((-1)^{k}\left(p_{k}d-q_{k}b\right)+\left(b q_{k+1}-d p_{k+1}\right)\right)\\
&=(-1)^{\left\lfloor\frac{k+1}{2}\right\rfloor}\left(q_{k+1}b-p_{k+1}d-(-1)^{k}\left(q_{k}b-p_{k}d\right)\right) \qquad\left(\textnormal{using lemma \eqref{lemma-4}}\right)\\
&=(-1)^{\left\lfloor\frac{k+1}{2}\right\rfloor}\left(q_{k+1}b-p_{k+1}d+(-1)^{k+1}\left(q_{k}b-p_{k}d\right)\right)\\
&=\alpha_{k+1}(b,d)
\end{align*}\end{small}
From this, we get directly:
\begin{align*}
t&=1\cdot\alpha_{k}(b-a,d-c)+\left(2+(-1)^{k+1}n_{k+1}\right)\gamma_{k}(b-a,d-c)\\
&=\alpha_{k+1}(b-a,d-c)
\end{align*}
Then, 
\begin{align*}
u&=(-1)\cdot\alpha_{k}(b,d)+\left(-1+(-1)^{k+2}n_{k+1}\right)\gamma_{k}(b,d)\\
&=(-1)^{\left\lfloor\frac{k}{2}\right\rfloor}\left(-q_{k}b+p_{k}d+(-1)^{k+1}\left(q_{k-1}b-p_{k-1}d\right)
+\left(-1+(-1)^{k+2}n_{k+1}\right)\left(p_{k}d-q_{k}b\right)\right)\\
&=(-1)^{\left\lfloor\frac{k}{2}\right\rfloor}\left(p_{k}d-q_{k}b+(-1)^{k+1}\left(q_{k-1}b-p_{k-1}d\right)-p_{k}d+q_{k}b-(-1)^{k+1}n_{k+1}p_{k}d+(-1)^{k+1}q_{k}b n_{n+1}\right)\\
&=(-1)^{\left\lfloor\frac{k}{2}\right\rfloor}(-1)^{k+1}\left(b\left(n_{k+1}q_{k}+q_{k-1}\right)-d\left(n_{k+1}p_{k}+p_{k-1}\right)\right)\\
&=(-1)^{\left\lfloor\frac{k+1}{2}\right\rfloor}\left(d p_{k+1}-b q_{k+1}\right)\qquad\left(\textnormal{using lemma \eqref{lemma-3}\textnormal{, point }}(i)\textnormal{ and lemma \eqref{lemma-4}}\right)\\
&=\gamma_{k+1}(b,d)
\end{align*}
Finally, using above calculation for $u$:
\begin{align*}
v&=(-1)\alpha_{k}(b-a,d-c)+\left(-1+(-1)^{k+2}n_{k+1}\right)\gamma_{k}(b-a,d-c)\\
&=\gamma_{k+1}(b-a,d-c)
\end{align*}
\end{enumerate}
\end{enumerate}
We just showed:
\begin{align*}
P_{j}&=\left(B^{-1}A^{2+(-1)^{j}n_{j}}\right)\left(B^{-1}A^{2+(-1)^{j-1}n_{j-1}}\right)\cdots\left(B^{-1}A^{2+(-1)^{1}n_{1}}\right)P_{0}\nonumber\\
&=\Big(\prod_{k=1}^{j}B^{-1}A^{2+(-1)^{j+1-k}n_{j+1-k}}\Big)P_{0}
\end{align*}
Using equation \eqref{P} and the definition of $P_{0}$, we get:
\begin{equation}
\left(A B^{-1}A\right)^{1-(-1)^{\left\lfloor\frac{j}{2}\right\rfloor}\mathrm{sgn}(d)}\left(C A^{2}\right)^{\frac{1-\mathrm{det}\left(M\right)}{2}}A^{b_{j}}=\Big(\prod_{k=1}^{j}B^{-1}A^{2+(-1)^{j+1-k}n_{j+1-k}}\Big)A^{-1}M A^{-1}B
\end{equation}
Solving this for $M$, we obtain formula \eqref{eq-1}. Note that we made, in above development, no assumptions on the continued fraction's length $j$; this shows that formula \eqref{eq-1} is independant of the chosen representation of the continued fraction associated to the rational $\frac{b}{d}$. 
\end{proof}
As another example, we can retrieve the fact that $A^{n}=\left(\begin{smallmatrix}1&n\\0&1\end{smallmatrix}\right)$, $\forall n\in\mathbb{Z}$ from lemma \eqref{lemma-1} simply by applying formula \eqref{eq-1} to the matrix $\left(\begin{smallmatrix}1&n\\0&1\end{smallmatrix}\right)$. Here, $j:=1$ as $\frac{n}{1}=[n]$ and $b_{1}=(-1)^{1}\mathrm{sgn}(1)\left(p_{0}\cdot 0-q_{0}\cdot 1\right)=0$ (recall that $q_{0}:=0$). Thus, 
\begin{equation}
\begin{pmatrix}1&n\\0&1\end{pmatrix}=\underbrace{\left(A B^{-1}A\right)^{1-(-1)^{\left\lfloor\frac{1}{2}\right\rfloor}\mathrm{sgn}(1)}}_{=I}A A^{-\left(2-n\right)}\underbrace{B A^{0}B^{-1}}_{=I}A=A A^{n-2}A=A^{n}
\end{equation}

\end{document}